\newcommand{\diesis}{^\#}
\newtheorem{theo}{Theorem}[section]
\newtheorem{lemma}{Lemma}[section]
\theoremstyle{definition}
\newtheorem{definiz}{Definition}[section]
\newtheorem{rem}{Remark}[section]
\numberwithin{equation}{section}
\newcommand{\R}{\mathbb R}
\newcommand{\de}{\partial}
\DeclareMathOperator{\divergenza}{div}
\begin{document}
\title[Sharp estimates for the Robin Gaussian torsional rigidity ]
{ Sharp estimates for the Gaussian torsional rigidity  with Robin boundary conditions} 
\author[F. Chiacchio]{
  Francesco Chiacchio
}
 \address{
Francesco Chiacchio \\
Universit\`a degli studi di Napoli ``Federico II''\\
Dipartimento di Ma\-te\-ma\-ti\-ca e Applicazioni ``R. Caccioppoli''\\
Complesso di Monte Sant'Angelo, Via Cintia,
80126 Napoli, Italia.
}
\email{francesco.chiacchio@unina.it}
\author[N. Gavitone]{
  Nunzia Gavitone
}
 \address{
Nunzia Gavitone \\
Universit\`a degli studi di Napoli ``Federico II''\\
Dipartimento di Ma\-te\-ma\-ti\-ca e Applicazioni ``R. Caccioppoli''\\
Complesso di Monte Sant'Angelo, Via Cintia,
80126 Napoli, Italia.
}
\email{nunzia.gavitone@unina.it}
\author[C. Nitsch]{
  Carlo Nitsch
}
 \address{
Carlo Nitsch \\
Universit\`a degli studi di Napoli ``Federico II''\\
Dipartimento di Ma\-te\-ma\-ti\-ca e Applicazioni ``R. Caccioppoli''\\
Complesso di Monte Sant'Angelo, Via Cintia,
80126 Napoli, Italia.
}
\email{c.nitsch@unina.it}
\author[C. Trombetti]{
  Cristina Trombetti
}
 \address{
Cristina Trombetti \\
Universit\`a degli studi di Napoli ``Federico II''\\
Dipartimento di Ma\-te\-ma\-ti\-ca e Applicazioni ``R. Caccioppoli''\\
Complesso di Monte Sant'Angelo, Via Cintia,
80126 Napoli, Italia.
}
\email{cristina@unina.it}
\date{\today}
\maketitle
\begin{abstract}
In this paper we provide a comparison result between the solutions to the torsion problem 
for the Hermite operator with Robin boundary 
conditions and the one of a suitable symmetrized  problem.   

\end{abstract}
\vspace{.5cm}

\noindent\textsc{MSC 2020:} 35J25, 35B45 \\
\textsc{Keywords}: Torsional rigidity, Hermite operator, Robin boundary conditions.

\vspace{.5cm}
\section{Introduction}
Let $\Omega $ be a smooth and possibly unbounded domain of $ \R^n$  and let $\nu$ be the unit outer normal to $\partial \Omega$.
 In this paper we consider the following  torsion problem for the Hermite operator with Robin boundary conditions
\begin{equation}
  \label{eq:2_intro}
  \left\{
    \begin{array}{ll}
      -\divergenza\left(\phi(x)\nabla u\right)= \phi(x) 
      &\text{in } 
      \Omega,\\[.2cm] 
      \dfrac{\partial u}{\partial \nu} +\beta u=0
      &\text{on } \de\Omega. 
    \end{array}
  \right.
\end{equation}

Throughout the paper $\beta$ will be a positive parameter and $\phi(x)$ will denote the density of  the normalized Gaussian measure in $\R^n$, that is 
\[
\phi(x)=\dfrac{1}{(2\pi)^{\frac n2}}\exp\left(-\frac{|x|^2}{2} \right).
\]
The interest in the study of the Hermite operator relies on its
 applications in various fields.
Just to mention a few,  it enters in the description of the harmonic oscillator in quantum mechanics (see, e.g., \cite{bf} and the references therein).
It attracts attention from probabilists too. Indeed, as well known,  the Hermite operator  is the generator of the Ornstein-Uhlenbeck semigroup (see, e.g.,\cite{8} and the references therein).

As we will recall in the next section, suitable weighted embedding trace  theorems hold true if $\Omega$ is  sufficiently smooth.
Therefore, classical arguments ensure that  problem \eqref{eq:2_intro} has a unique positive solution $u$. Furthermore, $u$  is a minimizer of the following functional
\begin{equation}
\label{tor_i}
\displaystyle \frac{1}{T_{\phi}(\Omega)}= \inf_{w \in H^1(\Omega,\phi)\setminus \{0\}} \dfrac{\displaystyle\int_{\Omega} |\nabla w|^2 \phi\,dx +\beta \displaystyle\int_{\partial \Omega}w^2 \phi \, d\mathcal H^{n-1}}{\left(\displaystyle \int_{\Omega}w \phi\,dx\right)^2},
\end{equation}
where $H^1(\Omega,\phi)$ is the   weighted Sobolev space 
naturally associated to problem \eqref{eq:2_intro} (see Section 2 for the definitions and properties). Note that, as a straightforward computation  shows, it holds that
\[
T_{\phi}(\Omega)=\| u\|_{L^1(\Omega,\phi)}:=\int_{\Omega} |u| \phi \,dx.
\]
The aim of this paper, is to prove an isoperimetric inequality for $T_{\phi}(\Omega)$ by means of the so-called Gaussian symmetrization.  This will  be achieved by  comparing  the solution to problem \eqref{eq:2_intro} with that to the following one
 \begin{equation}
  \label{eq:3i}
  \left\{
    \begin{array}{ll}
      -\divergenza\left(\phi(x)\nabla v\right)= \phi(x) 
      &\text{in } 
      \Omega^{\diesis}\\[.2cm] 
      \dfrac{\partial v}{\partial \nu} +\beta v=0
      &\text{on } \de\Omega^{\diesis}, 
    \end{array}
  \right.
\end{equation}  
where $\Omega^{\diesis}:=\{(x_{1},x_{2},\cdots ,x_{n})\in \mathbb{R}^{n}\text{ }\colon \text{
}x_{1}>\lambda\}$ and $\lambda$ is such that $$|\Omega|_{\phi}:=\displaystyle \int_\Omega \phi \,dx=|\Omega^{\diesis}|_{\phi}=h(\lambda),$$
with
\begin{equation}
\label{h}
h(\lambda):=\frac{1}{\sqrt{2\pi }}\int_{\lambda }^{+\infty}\,\text{exp}\left( -\dfrac{t^{2}}{2}\right) dt.\end{equation}


Our  main result is the following
 \begin{theo}
 \label{princ}
  Let $\Omega \in \mathcal G$, see Definition \ref{omega}, 
   Let $u$ and $v$ be the solutions to problems \eqref{eq:2_intro} and  \eqref{eq:3i}, respectively. Then the following comparison result holds
 \begin{equation}
 \label{comp}
\| u\|_{L^1(\Omega,\phi)}\le \| v\|_{L^1(\Omega^{\diesis}\!,\phi)}.
\end{equation}
 \end{theo}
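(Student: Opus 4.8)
The plan is to combine Gaussian (Ehrhard) symmetrization with a Talenti-type argument on the super-level sets of $u$, the genuinely new feature with respect to the Dirichlet case being the Robin boundary term. First I would record the structure of the symmetrized problem. Since $\phi$ factorizes as a product of one-dimensional Gaussian densities and both $\Omega^{\diesis}=\{x_1>\lambda\}$ and the data in \eqref{eq:3i} depend on $x_1$ alone, uniqueness forces $v=v(x_1)$. Integrating the resulting ordinary differential equation gives an explicit first integral for $v$; in particular the level sets $\{v>t\}$ are half-spaces $\{x_1>s\}$, for which the coarea identity, the Cauchy--Schwarz step, and the Gaussian isoperimetric inequality all hold with equality. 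The goal is then to show that the Gaussian decreasing rearrangement of $u$ is dominated by $v$, at least at the level of $L^1$ norms.

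For the core computation, set $\mu(t):=|\{u>t\}|_\phi$. For a.e.\ $t$ I would integrate the equation in \eqref{eq:2_intro} over $\{u>t\}$ and apply the divergence theorem, splitting $\partial\{u>t\}$ into its interior part $\Gamma_t:=\partial\{u>t\}\cap\Omega$ and its boundary part $\Sigma_t:=\partial\{u>t\}\cap\partial\Omega$. On $\Gamma_t$ the outer normal is $-\nabla u/|\nabla u|$, while on $\Sigma_t$ the Robin condition gives $\partial u/\partial\nu=-\beta u$, whence
\[
\int_{\Gamma_t}\phi\,|\nabla u|\,d\mathcal H^{n-1}+\beta\int_{\Sigma_t}\phi\,u\,d\mathcal H^{n-1}=\mu(t).
\]
By the Gaussian coarea formula $-\mu'(t)=\int_{\Gamma_t}\phi\,|\nabla u|^{-1}\,d\mathcal H^{n-1}$, so the Cauchy--Schwarz inequality on $\Gamma_t$ yields
\[
\Big(\int_{\Gamma_t}\phi\,d\mathcal H^{n-1}\Big)^{2}\le\Big(\mu(t)-\beta\int_{\Sigma_t}\phi\,u\,d\mathcal H^{n-1}\Big)\,\big(-\mu'(t)\big).
\]
Finally I would invoke Ehrhard's Gaussian isoperimetric inequality to bound the perimeter from below by the isoperimetric profile $\mathcal I(\mu(t))$, i.e.\ the Gaussian perimeter of the half-space of measure $\mu(t)$, which is exactly the value realized by the level sets of $v$.

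The crux, and the reason the hypothesis $\Omega\in\mathcal G$ enters, is twofold, and I expect it to be the main obstacle. Ehrhard's inequality controls the \emph{total} perimeter $\int_{\Gamma_t}\phi+\int_{\Sigma_t}\phi$, whereas the Cauchy--Schwarz step only supplies the \emph{interior} perimeter $\int_{\Gamma_t}\phi$; moreover the Robin term $\beta\int_{\Sigma_t}\phi\,u$ must be reabsorbed. The plan is to use the geometric restriction defining $\mathcal G$ (a convexity/one-dimensionality condition adapted to the Gaussian weight) to ensure that estimating or discarding the contribution of $\Sigma_t$ is favorable, so that the interior perimeter alone still dominates $\mathcal I(\mu(t))$ up to a controlled boundary remainder. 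I do not expect this remainder to vanish pointwise; it should instead be reabsorbed only after integration in $t$, using the global energy identity $\int_\Omega\phi|\nabla u|^2+\beta\int_{\partial\Omega}\phi\,u^2=\int_\Omega\phi\,u$ obtained by testing \eqref{tor_i} with $u$.

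Putting the pieces together, the estimate above produces a differential inequality
\[
-\mu'(t)\,\mu(t)\ \ge\ \mathcal I\big(\mu(t)\big)^{2}\qquad\text{(up to the boundary remainder)},
\]
to be compared with the exact identity $-\eta'(t)\,\eta(t)=\mathcal I\big(\eta(t)\big)^{2}$ satisfied by $\eta(t):=|\{v>t\}|_\phi$, where $\mu(0)=|\Omega|_\phi=|\Omega^{\diesis}|_\phi=\eta(0)$. Integrating this comparison and using the layer-cake representations $\|u\|_{L^1(\Omega,\phi)}=\int_0^{+\infty}\mu(t)\,dt$ and $\|v\|_{L^1(\Omega^{\diesis}\!,\phi)}=\int_0^{+\infty}\eta(t)\,dt$, together with the energy identity to absorb the accumulated boundary terms, should yield \eqref{comp}. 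The presence of the Robin remainder is precisely what obstructs a pointwise comparison between the rearrangement of $u$ and $v$, consistently with the fact that Theorem \ref{princ} is phrased at the level of $L^1$ norms rather than as a pointwise inequality.
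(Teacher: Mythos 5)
Your overall architecture (level sets of $u$, a differential inequality for $\mu(t)$ via the divergence theorem, Cauchy--Schwarz plus the Gaussian isoperimetric inequality, equality for the half-space problem, integration to an $L^1$ comparison) matches the paper, and your identity $\int_{\Gamma_t}\phi|\nabla u|\,d\mathcal H^{n-1}+\beta\int_{\Sigma_t}\phi\, u\,d\mathcal H^{n-1}=\mu(t)$ is correct. But the step you yourself flag as ``the crux'' is a genuine gap, and your proposed fix does not exist: the class $\mathcal G$ of Definition \ref{omega} is \emph{not} a convexity or one-dimensionality condition; it only requires compactness of the embedding $H^1(\Omega,\phi)\hookrightarrow L^2(\Omega,\phi)$ and of the trace operator, so it cannot be used to control the exterior portion $\Sigma_t$ of the level boundary geometrically. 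The paper closes this gap differently: it applies Cauchy--Schwarz to the \emph{whole} reduced boundary $\partial U_t$, writing $P_\phi(U_t)^2\le\bigl(\int_{\partial U_t}\phi/|\partial u/\partial\nu|\,d\mathcal H^{n-1}\bigr)\bigl(\int_{\partial U_t}\phi\,|\partial u/\partial\nu|\,d\mathcal H^{n-1}\bigr)$ and using the Robin condition $|\partial u/\partial\nu|=\beta u$ on $\Sigma_t$ to evaluate both factors: the second factor is exactly $\mu(t)$, and the first is $-\mu'(t)+\frac1\beta\int_{\Sigma_t}\phi/u\,d\mathcal H^{n-1}$. This puts the full Gaussian perimeter on the left, so the isoperimetric inequality applies with no interior/total mismatch, at the price of the extra term $\frac1\beta\int_{\Sigma_t}\phi/u$ (note: $\phi/u$, not $\phi\, u$) on the right.

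The second gap is in how the remainder is ``reabsorbed after integration in $t$''; this is not a routine ODE comparison from $\mu(0)=\eta(0)$. The paper first proves $u_m:=\inf_\Omega u\le v_m:=\min v$ (Lemma \ref{confr}, which itself uses the isoperimetric inequality), so that $\varphi(t)=|\Omega|_\phi$ on $[0,v_m]$; it then multiplies the differential inequality by $t\,\mu(t)\exp\left(\left(h^{-1}(\mu(t))\right)^2\right)$, uses the monotonicity of $s\mapsto s^2\exp\left(\left(h^{-1}(s)\right)^2\right)$ (equivalently, of $1/F$) to replace $\mu(t)$ by $|\Omega|_\phi$ in the boundary term, bounds $\int_0^\tau t\int_{\Sigma_t}\phi/u\,d\mathcal H^{n-1}\,dt$ by $|\Omega|_\phi/(2\beta)$ via Fubini and the equation (Lemma \ref{fub}), integrates by parts, and applies a Gronwall lemma on $[v_m,+\infty)$ before letting $\tau\to+\infty$. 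The identity you invoke, $\int_\Omega\phi|\nabla u|^2+\beta\int_{\partial\Omega}\phi\,u^2=\int_\Omega\phi\,u$, is not the one actually needed (the relevant one is $\beta\int_{\partial\Omega}\phi\,u\,d\mathcal H^{n-1}=|\Omega|_\phi$, obtained by integrating the equation over $\Omega$), and without the weight $t$, the monotonicity of $F$, and the comparison of minima the remainder does not reduce to a constant that cancels against the corresponding exact term for $v$. As written, the proposal does not close.
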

In other words,   among all sufficiently  smooth sets of
$\mathbb R^n$, having prescribed Gaussian measure, the half-spaces maximize 
$T_{\phi}(\Omega)$. 
Note that inequality \eqref{comp} provides a sharp and explicit estimate for $\| u\|_{L^1(\Omega,\phi)}$, since it is elementary to derive the exact form of $v$
\begin{equation}
\label{rad}
v(x)=v(x_1)=C(\lambda,\beta)+\displaystyle\int_{\lambda}^{x_1}\exp\left( \frac{{r}^2}{2}\right)\int_{r}^{+\infty} \exp\left( -\frac{t^2}{2}\right) \, dt\,dr,
\end{equation}
where 
\[
C(\lambda,\beta)=\frac{1}{\beta}\exp\left( \frac{\lambda^2}{2}\right)\int_{\lambda}^{+\infty} \exp\left( -\frac{t^2}{2}\right) \, dt.
\]
 Now let us briefly  describe how our result is inserted in the literature. In \cite{ant} and \cite{bg} the authors investigate the analogous issue for the classical Laplace operator. In particular, in \cite{bg}, the authors obtain an isoperimetric inequality for the Robin torsional rigidity  in a wider context, by studying  a family of Faber-Krahn inequalities. 
They prove that the Robin Laplace torsional rigidity is maximum on balls among all bounded and Lipschitz domains,  once the Lebesgue measure is fixed. 
Their proof, unlike the one used for the Dirichlet boundary conditions, does not make use of any symmetrization techniques, rather, it is based on reflection arguments. 
Recently, in  \cite{ant}, see also \cite{acnt},  the authors obtain the same isoperimetric inequality via a ``Talenti type  comparison result''.
Note that the result contained in \cite{ant} are quite surprising. Indeed, as well known, the Talenti's technique is designed  for problems whose solution has level sets that do not touch the boundary of the domain where the problem is posed. A phenomenon that tipically occurs when Robin boundary conditions are imposed.
In this paper, because of the structure of the differential operator we are considering, in place of the more common Schwarz symmetrization we use 
 the Gauss symmetrization. A procedure that transforms a positive function into a new one having as super level sets half-spaces whose Gauss measure is the same of the original function.

 
The structure of the paper is the  following. In Section 2 we fix some notation  and we recall some results that we will use in the paper. The third Section contains the proof of our main result.

\section{Notation and preliminaries}

Let $A$ be any Lebesgue measurable set of $
\mathbb{R}^{n}$.  The Gaussian perimeter of $A$ is 
\begin{equation*}
P_{\phi }(A)=\left\{ 
\begin{array}{cc}
\displaystyle\int_{\partial A}\phi (x)\text{ }d\mathcal{H}^{n-1}(x) & \text{
if }\partial A\text{ is }(n-1)-\text{rectifiable} \\ 
&  \\ 
+\infty  & \text{otherwise,}
\end{array}
\right. 
\end{equation*}
where $d\mathcal H^{n-1}$ denotes the $(n-1)-$dimensional Hausdorff measure in $\R^n$.

While the Gaussian measure of $A$ is given by 
\begin{equation}
|A|_{\phi }=\int_{A}\phi (x)\,dx\in \left[ 0,1\right] .  \label{mis_pes}
\end{equation}
The celebrated Gaussian isoperimetric inequality (see \cite{ST}, \cite{Bo}
and \cite{Eh}) states that among all Lebesgue measurable sets in $\mathbb{R}
^{n},$ with prescribed Gaussian measure, the half-spaces 
minimize the Gaussian perimeter. Furthermore the isoperimetric set is
unique, clearly, up to a rotation with respect to the origin (see \cite{CK}
and \cite{CFMP}).

The isoperimetric function in the Gauss space, $I(s),$ is 
\begin{equation}
\label{Is_f}
I:s\in \left[ 0,1\right] \rightarrow \frac{1}{\sqrt{2\pi }}\exp \left( -
\frac{(h^{-1}(s))^2}{2}\right),
\end{equation}
where $h^{-1}$ is the inverse function of $h$, defined in \eqref{h}. 
Note, indeed, that the Gaussian perimeter of any half-space of Gaussian
measure $s $ is equal to $I(s).$ The isoperimetric
property of the half-spaces can finally be stated as follows.
\begin{theo}
\label{iso} If $\Omega \subset \mathbb{R}^{n}$ is any Lebesgue measurable
set it holds that

\begin{equation*}
P_{\phi }(\Omega )\geq P_{\phi }(\Omega ^{\diesis })
=
I \left( \left\vert
\Omega \right\vert _{\phi }\right) ,
\end{equation*}
where equality holds, if and only if, $\Omega $ is equivalent to an half-space.
\end{theo}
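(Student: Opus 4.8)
The identity $P_\phi(\Omega^{\diesis})=I(|\Omega|_\phi)$ is elementary: the boundary $\{x_1=\lambda\}$ has Gaussian area $\tfrac1{\sqrt{2\pi}}e^{-\lambda^2/2}=I(h(\lambda))$, exactly as recorded before the statement. So the content is the inequality $P_\phi(\Omega)\ge P_\phi(\Omega^{\diesis})$ together with its equality case, and the plan is to obtain it by passing to the enlargement (Minkowski-content) formulation and importing the sharp half-space profile from the classical spherical isoperimetric inequality through Poincar\'e's limit. For a measurable set $\Omega$ and $\eps>0$ write $\Omega_\eps=\{x\in\R^n:\dist(x,\Omega)<\eps\}$; since $(\Omega^{\diesis})_\eps=\{x_1>\lambda-\eps\}$, the inequality I would target is
\begin{equation*}
|\Omega|_\phi=h(\lambda)\ \Longrightarrow\ |\Omega_\eps|_\phi\ \ge\ |(\Omega^{\diesis})_\eps|_\phi=h(\lambda-\eps)\qquad\text{for every }\eps>0 .
\end{equation*}
For a set with smooth boundary the Gaussian Minkowski content coincides with $P_\phi$, so dividing by $\eps$ and letting $\eps\to0^+$ upgrades this to $P_\phi(\Omega)\ge -h'(\lambda)=I(h(\lambda))=P_\phi(\Omega^{\diesis})$; an arbitrary set of finite Gaussian perimeter is then reached by $L^1(\phi)$-approximation with smooth sets of converging perimeters and the continuity of $I$, while if $\partial\Omega$ is not rectifiable $P_\phi(\Omega)=+\infty$ and there is nothing to prove.

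The core step is the enlargement inequality, and here I would use \emph{Poincar\'e's limit}: writing $\sigma_m$ for the uniform probability measure on the sphere $\sqrt m\,\Sk^{m-1}\subset\R^m$ and $\pi_n\colon\R^m\to\R^n$ for the projection onto the first $n$ coordinates, the push-forward $(\pi_n)_\#\sigma_m$ converges weakly to the Gaussian measure $\phi\,dx$ as $m\to\infty$. Given $\Omega\subset\R^n$ I would set $A_m:=\pi_n^{-1}(\Omega)\cap\sqrt m\,\Sk^{m-1}$ and invoke the \emph{L\'evy--Schmidt spherical isoperimetric inequality}: among subsets of the sphere of prescribed measure, geodesic caps minimise the measure of every geodesic $\eps$-neighbourhood. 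The cap with the same $\sigma_m$-measure as $A_m$ has the form $\{x_1>c_m\}$, and since the law of the first coordinate under $\sigma_m$ tends to a standard Gaussian, such a cap projects in the limit exactly onto $\{x_1>\lambda\}=\Omega^{\diesis}$; moreover on $\sqrt m\,\Sk^{m-1}$ the geodesic $\eps$-enlargement converges to the Euclidean $\eps$-enlargement in $\R^n$. Letting $m\to\infty$ in $\sigma_m\big((A_m)_\eps^{\mathrm{geo}}\big)\ge\sigma_m(\mathrm{cap}_\eps)$ then produces $|\Omega_\eps|_\phi\ge h(\lambda-\eps)$.

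I expect two steps to be the most delicate. The first is the limit itself: weak convergence of $(\pi_n)_\#\sigma_m$ does not commute with the enlargement operation, so one must check that no mass is lost or gained when passing from $\sigma_m\big((A_m)_\eps\big)$ to $|\Omega_\eps|_\phi$ and from $\sigma_m(\mathrm{cap}_\eps)$ to $h(\lambda-\eps)$; a safe route is to estimate $\liminf$ and $\limsup$ separately, using that $\Omega_\eps$ is open, the Portmanteau theorem, and the uniform comparison of geodesic and Euclidean distance on $\sqrt m\,\Sk^{m-1}$ for large $m$. The second, and genuinely harder, point is the \emph{equality case}: the limiting argument yields only the inequality and erases the rigidity information, so the characterisation of the extremals as half-spaces does not follow from it. For this I would rely on the analyses of \cite{CK} and \cite{CFMP}, where, by studying the cases of equality in the functional (Bobkov) form of the inequality and in the one-dimensional reduced perimeter obtained after symmetrisation, it is proved that any optimal set coincides with a half-space up to a rotation about the origin and a set of Gaussian measure zero.
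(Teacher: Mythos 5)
The paper does not actually prove Theorem \ref{iso}: it is recalled as the classical Gaussian isoperimetric inequality, with the inequality attributed to \cite{ST}, \cite{Bo}, \cite{Eh} and the characterisation of equality to \cite{CK} and \cite{CFMP}. Your sketch is precisely the original Poincar\'e-limit argument of Sudakov--Cirel'son and Borell (spherical isoperimetry on $\sqrt m\,\Sk^{m-1}$ plus projection of the uniform measure), it is sound at the level of detail given, the genuinely delicate points (non-commutation of weak convergence with enlargement, loss of rigidity in the limit) are correctly identified, and the equality case is delegated to the same two references the paper itself cites, so there is no discrepancy to report.
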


Let $\Omega \subset \mathbb R^n$ be an open connected set. We will denote by $L^{2}(\Omega ,\phi )$ the set of all real
measurable functions defined in $\Omega $ such that
\begin{equation*}
\left\Vert u\right\Vert _{L^{2}(\Omega ,\phi )}^{2}:=\int_{\Omega
}u^{2}(x)\phi (x)dx<+\infty .
\end{equation*}
For our future purposes we need also to introduce the following weighted
Sobolev space
\begin{equation*}
H^{1}(\Omega ,\phi ):=\left\{ u\in W_{\text{loc}}^{1,1}(\Omega
):(u,\left\vert \nabla u\right\vert )\in L^{2}(\Omega ,\phi)\times L^{2}(\Omega
,\phi )\right\} ,
\end{equation*}
endowed with the norm
\begin{equation*}
\left\Vert u\right\Vert _{H^{1}(\Omega ,\phi )}=\left\Vert u\right\Vert
_{L^{2}(\Omega ,\phi )}+\left\Vert \nabla u\right\Vert _{L^{2}(\Omega ,\phi)}.
\end{equation*}
In the sequel of the paper, we need to introduce the following family of sets.
\begin{definiz}
\label{omega}
A Lipschitz domain $\Omega$ of $\mathbb R^n$ is in $\mathcal G$ 
if  $|\Omega|_{\phi} \in (0,1)$
and  the following conditions are fulfilled:
\begin{itemize}
\item[(i)]  $H^{1}(\Omega ,\phi )$ is compactly embedded in 
$L^{2}(\Omega ,\phi)$. 
\item[(ii)]  The trace operator $T$
\begin{equation*}
T:u\in H^{1}(\Omega ,\phi)\rightarrow \left. u\right\vert _{\partial \Omega
} \in L^{2}(\partial \Omega ,\phi),
\end{equation*}
is well defined;

\item[(iii)] The trace operator defined in the previous point is compact from $H^{1}(\Omega ,\phi )$ onto  $L^{2}(\partial \Omega ,\phi)$.

\end{itemize}
In (ii) and (iii) the functional space $L^{2}(\partial \Omega ,\phi )$ is endowed with the
norm
\begin{equation*}
\left\Vert u\right\Vert _{L^{2}(\partial \Omega ,\phi )}^{2}=\int_{\partial
\Omega }u^{2}(x)\phi (x)d\mathcal{H}^{n-1}(x).
\end{equation*}
\end{definiz} 

We stress that $\mathcal G$ is non empty (see for instance Remark 2.1 in \cite{cg}). 

Finally, we recall the following version of  Gronwall's Lemma.
\begin{lemma}\label{gron} Let $\xi(\tau)$ be a continuously differentiable function satisfying, for some constant $C \ge0$ the following differential inequality
\[
\tau \xi'(\tau)\le \xi(\tau)+C \quad \text{for all }\tau \ge \tau_0>0.
\]
Then
\begin{equation}
\xi(\tau)\le \displaystyle \tau\frac{\xi(\tau_0)+C}{\tau_0} -C\quad \text{for all }\tau \ge \tau_0.
\end{equation}
\begin{equation}
\xi'(\tau)\le \displaystyle \frac{\xi(\tau_0)+C}{\tau_0} \quad \text{for all }\tau \ge \tau_0.
\end{equation}
\end{lemma}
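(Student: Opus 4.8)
The plan is to recognize that the hypothesis is exactly a differential inequality for the auxiliary quantity $\xi(\tau)/\tau$. The key observation — really the only idea needed — is that dividing $\tau\xi'(\tau)\le \xi(\tau)+C$ by $\tau^2>0$ reproduces the quotient rule: for all $\tau\ge\tau_0$,
\[
\frac{d}{d\tau}\left(\frac{\xi(\tau)}{\tau}\right)=\frac{\tau\xi'(\tau)-\xi(\tau)}{\tau^2}\le\frac{C}{\tau^2}.
\]
Thus the derivative of $\tau\mapsto\xi(\tau)/\tau$ is bounded above by an explicitly integrable function, which is what makes the whole argument elementary.

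Next I would integrate this inequality over $[\tau_0,\tau]$, which is legitimate since $\xi$ is continuously differentiable. Using $\int_{\tau_0}^{\tau}C/s^2\,ds=C\left(1/\tau_0-1/\tau\right)$, this gives
\[
\frac{\xi(\tau)}{\tau}-\frac{\xi(\tau_0)}{\tau_0}\le C\left(\frac{1}{\tau_0}-\frac{1}{\tau}\right).
\]
Rearranging to isolate $\xi(\tau)/\tau$ and then multiplying through by $\tau>0$ yields the first claimed bound $\xi(\tau)\le\tau\,(\xi(\tau_0)+C)/\tau_0-C$.

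For the second inequality I would simply feed the estimate just obtained back into the original differential inequality: bounding $\xi(\tau)$ from above by $\tau\,(\xi(\tau_0)+C)/\tau_0-C$ gives
\[
\tau\xi'(\tau)\le\xi(\tau)+C\le\tau\,\frac{\xi(\tau_0)+C}{\tau_0},
\]
and dividing by $\tau>0$ produces the stated bound $\xi'(\tau)\le(\xi(\tau_0)+C)/\tau_0$.

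I do not anticipate any genuine obstacle. The single nontrivial step is spotting the integrating factor $1/\tau^2$ (equivalently, choosing $\xi/\tau$ as the right auxiliary function); everything afterward is a one-line integration followed by a back-substitution, with the continuous differentiability of $\xi$ ensuring that the integration step is fully justified.
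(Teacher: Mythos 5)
Your proof is correct and complete: dividing the hypothesis by $\tau^2$ to recognize $\frac{d}{d\tau}\bigl(\xi(\tau)/\tau\bigr)\le C/\tau^2$, integrating over $[\tau_0,\tau]$, and back-substituting into the original inequality yields both stated bounds exactly. The paper states this Gronwall-type lemma without proof, so there is nothing to compare against; your argument is the standard one and fills the omission correctly.
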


\section{Proof of the main result}
In this section,  $u$ and $v$ will denote  the solutions to problems \eqref{eq:2_intro} and  \eqref{eq:3i},  respectively.
In order to prove our isoperimetric inequality for $T_{\phi}(\Omega)$, we need the following auxiliary result which may have independent interest.
\begin{lemma}
\label{confr}
The following inequalities hold true
\begin{equation}
\label{min}
0 \le u_{m}\le v_{m},
\end{equation}
where  \[ u_{m}:=\inf_{\Omega}
\,\,u, \quad  v_{m}:=
\min_{\overline{\Omega^{\diesis}}} \,\,v.\]
\end{lemma}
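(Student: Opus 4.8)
The plan is to prove the two inequalities in \eqref{min} separately: the lower bound is essentially the positivity of $u$, while the upper bound rests on a single integral identity combined with the Gaussian isoperimetric inequality (Theorem \ref{iso}).

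For the lower bound $0\le u_m$, I would simply invoke the fact, recalled in the Introduction, that problem \eqref{eq:2_intro} admits a unique \emph{positive} solution $u$; hence $u>0$ in $\Omega$ and $u_m=\inf_\Omega u\ge 0$. The core of the matter is therefore the upper bound $u_m\le v_m$. The key step is to test the weak formulation of \eqref{eq:2_intro} against the constant function $w\equiv 1$. This is an admissible test function: since $\Omega\in\mathcal G$ we have $|\Omega|_\phi<\infty$ and $P_\phi(\Omega)<\infty$ (the latter because the trace operator in Definition \ref{omega} is bounded), so $1\in H^1(\Omega,\phi)$ with trace $1\in L^2(\partial\Omega,\phi)$. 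As $\nabla 1=0$, the weak formulation collapses to the identity
\[
\beta\int_{\partial\Omega}u\,\phi\,d\mathcal H^{n-1}=\int_\Omega\phi\,dx=|\Omega|_\phi .
\]
Because $u\ge u_m$ on $\partial\Omega$ (in the trace sense, by continuity up to the boundary) and $\phi>0$, the left-hand side is bounded below by $\beta u_m P_\phi(\Omega)$, whence
\[
u_m\le\frac{|\Omega|_\phi}{\beta\,P_\phi(\Omega)} .
\]

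To conclude, I would identify the right-hand side with $v_m$. Since the integrand in \eqref{rad} is positive, $v(x_1)$ is increasing, so $v_m=v(\lambda)=C(\lambda,\beta)$; using $\int_\lambda^{+\infty}e^{-t^2/2}\,dt=\sqrt{2\pi}\,|\Omega|_\phi$ together with $I(|\Omega|_\phi)=\tfrac{1}{\sqrt{2\pi}}e^{-\lambda^2/2}$ (recall $h^{-1}(|\Omega|_\phi)=\lambda$), a short computation gives $v_m=|\Omega|_\phi/(\beta\,I(|\Omega|_\phi))$. Alternatively, and more symmetrically, the very same test-function identity applied to $v$ on $\Omega^\#$ yields $\beta v_m P_\phi(\Omega^\#)=|\Omega|_\phi$ at once, since $v$ is constant (equal to $v_m$) on $\partial\Omega^\#$. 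Finally, the Gaussian isoperimetric inequality gives $P_\phi(\Omega)\ge P_\phi(\Omega^\#)=I(|\Omega|_\phi)$, and combining everything,
\[
u_m\le\frac{|\Omega|_\phi}{\beta\,P_\phi(\Omega)}\le\frac{|\Omega|_\phi}{\beta\,I(|\Omega|_\phi)}=v_m .
\]

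The only points requiring care are the admissibility of the constant test function and the continuity of $u$ up to $\partial\Omega$ (so that $u\ge u_m$ holds on the boundary); both are guaranteed by $\Omega\in\mathcal G$ and standard regularity for \eqref{eq:2_intro}. I expect these to be the main, though minor, technical obstacles, everything else being a direct chain from the integral identity and Theorem \ref{iso}. It is worth noting that this argument never localizes the minimum of $u$: it uses only the pointwise bound $u\ge u_m$ and the global averaged identity, which is precisely what makes it robust for possibly unbounded $\Omega$.
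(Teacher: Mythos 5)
Your proof is correct and follows essentially the same route as the paper: the identity $\beta\int_{\partial\Omega}u\,\phi\,d\mathcal H^{n-1}=|\Omega|_\phi$ (obtained in the paper by integrating the equation over $\Omega$ and using the Robin condition, which is the same as testing against $w\equiv 1$), the pointwise bound $u\ge u_m$ on $\partial\Omega$, the analogous identity $v_m P_\phi(\Omega^{\diesis})=|\Omega|_\phi/\beta$ on the half-space, and the Gaussian isoperimetric inequality $P_\phi(\Omega)\ge P_\phi(\Omega^{\diesis})$. The only minor divergence is the lower bound $u_m\ge 0$: the paper actually proves it by testing with $u^-=\max\{0,-u\}$ rather than citing the positivity asserted in the introduction, which would be the cleaner, self-contained choice.
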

\begin{proof}
In order to prove the first inequality in \eqref{min}, we use $u^-:=\max\{0,-u\}$ as test function in \eqref{eq:2_intro}, obtaining 
\[
-\int_{\Omega} |\nabla u^-|^2\phi(x) \,dx-\beta \int_{\partial \Omega}(u^-)^2 \phi(x)\, d \mathcal H^{n-1}=\int_{\Omega}u^- \phi(x)\,dx.
\]
Hence $u^-=0$ a.e. in $\Omega$.

Concerning the second inequality in \eqref{min}, we observe that the function $v(x)=v(x_1)$ defined in \eqref{rad} is  increasing. Therefore it  achieves its minimum $v_{m}$  on $\partial \Omega^{\diesis}$. Let $u$ be the solution to the problem \eqref{eq:2_intro}, then 
\begin{gather}
\begin{split}
v_{m}\,P_{\phi}(\Omega^{\diesis})&=\int_{\partial\Omega^{\diesis}}v\,\phi \, d\mathcal H^{n-1}= -\frac{1}{\beta} \int_{\partial\Omega^{\diesis}}\frac{\partial u}{\partial  \nu} \phi \,\mathcal H^{n-1}\\&=-\frac{1}{\beta}\int_{\Omega^{\diesis}}\text{div}\left(\phi \nabla u \right)\,dx=\frac{1}{\beta} |\Omega^{\diesis}|_{\phi}=\frac{1}{\beta}|\Omega|_{\phi} \\&=\int_{\partial \Omega} u \,\phi d\mathcal H^{n-1}\ge u_{m} P_{\phi}(\Omega)\ge u_{m} P_{\phi}(\Omega^{\diesis}),\label{ult}
\end{split}
\end{gather}
where last inequality follows from the weighted  isoperimetric inequality \eqref{iso}. The claim is  hence proven.  
\end{proof}

In the sequel the following notation will be in force. 

For $t \ge0$ we denote by
\begin{equation*}
U_t=\{x \in \Omega \colon u(x)>t\}, \; \partial U_t^{int}=\partial U_t \cap \Omega,\;\partial U_t^{ext}=\partial U_t \cap \partial\Omega,
\end{equation*}
and by 
\begin{equation}
\label{mu}
\mu(t)=|U_t|_{\phi} \,\,\text{ and } \,\,P_u(t)= P_{\phi}(U_t).
\end{equation}
Analogously  if $t \ge0$ we denote by 
\begin{equation}
\label{fii}
V_t=\{x \in \Omega \colon v(x)>t\},\,\, \varphi(t)=|V_t|_{\phi}\,\,\text{ and }\,\,P_v(t)= P_{\phi}(V_t).
\end{equation}
\vspace{.05cm}
\begin{rem}
An immediate consequence of Proposition \ref{confr} is the following inequality
\begin{equation} 
\label{distr}
\mu(t) \le \varphi (t)=|\Omega|_{\phi} \, \quad \forall t \in [0,v_{m}].
\end{equation}
\end{rem}
\vspace{.2cm}
In order to prove our  main results  we need some further lemmata. 
\begin{lemma}
\label{first}
For a.e. $t \ge 0$ it holds
\begin{equation}
\label{1}
 \frac{1}{2\pi}\exp\left( -\left(h^{-1}(\mu(t))\right)^2\right) \le \mu(t) \left((-\mu'(t))+\frac {1}{\beta} \int_{\partial U_t^{ext}}\displaystyle\frac{\phi(x)}{u}\,d \mathcal H^{n-1}\right),
\end{equation}
while 
\begin{equation}
\label{2}
 \frac{1}{2\pi}\exp\left( -\left(h^{-1}(\varphi(t))\right)^2\right) = \varphi(t) \left((-\varphi'(t))+\frac {1}{\beta} \int_{\partial V_t}\frac{\phi(x)}{v}\,d \mathcal H^{n-1}\right),
\end{equation}
where $h$ is defined in \eqref{h}.
\end{lemma}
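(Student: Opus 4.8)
The plan is to run a Talenti-type argument adapted to the Robin setting and the Gaussian weight. The first observation is that, by \eqref{Is_f}, the left-hand side of \eqref{1} is exactly $I(\mu(t))^2$ and that of \eqref{2} is $I(\varphi(t))^2$. I would fix a.e.\ $t>0$ (so that coarea and Sard-type regularity of the level set are available) and integrate the equation in \eqref{eq:2_intro} over the superlevel set $U_t$. The divergence theorem, together with the splitting $\partial U_t=\partial U_t^{int}\cup\partial U_t^{ext}$, gives the key boundary identity: on $\partial U_t^{int}$ the outer normal of $U_t$ is $-\nabla u/|\nabla u|$, while on $\partial U_t^{ext}\subset\partial\Omega$ the Robin condition yields $\partial u/\partial\nu=-\beta u$, so that (using $|U_t|_{\phi}=\mu(t)$)
\begin{equation*}
\mu(t)=\int_{\partial U_t^{int}}|\nabla u|\,\phi\,d\mathcal H^{n-1}+\beta\int_{\partial U_t^{ext}}u\,\phi\,d\mathcal H^{n-1}. \tag{$\star$}
\end{equation*}
In parallel, the coarea formula applied to $\mu(t)=\int_{\{u>t\}}\phi\,dx$ gives the standard relation $-\mu'(t)=\int_{\partial U_t^{int}}\phi/|\nabla u|\,d\mathcal H^{n-1}$ for a.e.\ $t$.

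The heart of the matter is a single Cauchy--Schwarz inequality over the \emph{whole} boundary $\partial U_t$, tuned so that its first factor collapses through $(\star)$. Using on $\partial U_t^{int}$ the factorization $\phi=\sqrt{\phi\,|\nabla u|}\cdot\sqrt{\phi/|\nabla u|}$ and on $\partial U_t^{ext}$ the factorization $\phi=\sqrt{\beta\,u\,\phi}\cdot\sqrt{\phi/(\beta u)}$, Cauchy--Schwarz for the measure $d\mathcal H^{n-1}$ gives
\begin{equation*}
P_{\phi}(U_t)^2\le\left(\int_{\partial U_t^{int}}|\nabla u|\,\phi+\beta\int_{\partial U_t^{ext}}u\,\phi\right)\left(\int_{\partial U_t^{int}}\frac{\phi}{|\nabla u|}+\frac{1}{\beta}\int_{\partial U_t^{ext}}\frac{\phi}{u}\right).
\end{equation*}
By $(\star)$ the first factor equals $\mu(t)$, and by the coarea relation the second factor is $-\mu'(t)+\frac1\beta\int_{\partial U_t^{ext}}\phi/u$. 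Finally the Gaussian isoperimetric inequality (Theorem \ref{iso}) bounds $P_{\phi}(U_t)\ge I(\mu(t))\ge0$ from below, so $I(\mu(t))^2\le P_{\phi}(U_t)^2\le\mu(t)\big(-\mu'(t)+\frac1\beta\int_{\partial U_t^{ext}}\phi/u\big)$, which is precisely \eqref{1}. Positivity of $u$ (first part of Lemma \ref{confr}), with $u\ge t>0$ on $\partial U_t^{ext}$, makes $1/u$ meaningful there.

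For the identity \eqref{2} I would repeat the same chain for $v$. Since $v=v(x_1)$ is increasing, every superlevel set $V_t$ is a half-space, $|\nabla v|$ and $v$ are constant on each level hyperplane, and the exterior part $\partial V_t^{ext}$ is all of $\partial\Omega^{\diesis}$ when $t\le v_m$ and empty when $t>v_m$. Hence the Cauchy--Schwarz step holds with \emph{equality} (the two integrands are proportional on each piece) and the isoperimetric inequality also holds with equality because half-spaces are the Gaussian isoperimetric sets; running the three steps as equalities yields \eqref{2}. A direct check from the explicit form \eqref{rad} (using $v'(\lambda)=\beta v_m$ and $h^{-1}(|\Omega|_{\phi})=\lambda$) confirms the constant in the boundary term for $t\le v_m$.

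The step I expect to require the most care is the rigorous justification of the distributional identities, in particular the use of the divergence theorem on $U_t$ and the differentiation relation $-\mu'(t)=\int_{\partial U_t^{int}}\phi/|\nabla u|$: since $u$ is only $H^1(\Omega,\phi)$ and $\mu$ is a priori merely monotone, these hold only for a.e.\ $t$ and rely on Sard's theorem together with coarea technicalities and on controlling how the level sets meet $\partial\Omega$, so that the decomposition $\partial U_t=\partial U_t^{int}\cup\partial U_t^{ext}$ is $(n-1)$-rectifiable and the boundary integrals are finite.
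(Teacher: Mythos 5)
Your proposal is correct and follows essentially the same route as the paper: Cauchy--Schwarz on the Gaussian perimeter of $\partial U_t$ (with the two factorizations amounting to the paper's single one in terms of $|\partial u/\partial\nu|$, since $|\partial u/\partial\nu|=|\nabla u|$ on $\partial U_t^{int}$ and $=\beta u$ on $\partial U_t^{ext}$), the divergence theorem to identify the first factor with $\mu(t)$, the coarea formula plus the Robin condition for the second factor, and the Gaussian isoperimetric inequality to bound $P_\phi(U_t)$ from below, with all steps becoming equalities for the one-dimensional monotone function $v$.
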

\begin{proof} Sard's Lemma ensures that $U_t$ is a regular level set, for almost every $t\ge0$. Then it holds
\begin{equation}
\label{conto1}
\begin{split}
P^2_u(t)&=\left(\int_{\partial U_t}\phi(x)\,d \mathcal H^{n-1}\right)^2=\left(\int_{\partial U_t}\left(\frac{\phi(x)}{\big|\frac{\partial u}{\partial \nu}\big|}\right)^{\frac 12}\left(\phi(x)\Big|\frac{\partial u}{\partial \nu}\Big|\right)^{\frac 12}\,d \mathcal H^{n-1}\right)^2\\
&\le \left(\int_{\partial U_t}\frac{\phi(x)}{\big|\frac{\partial u}{\partial \nu}\big|}\,d \mathcal H^{n-1}\right) \left( \int_{\partial U_t}\phi(x)\Big|\frac{\partial u}{\partial \nu}\Big|\,d \mathcal H^{n-1}\right) = \mu(t)\left(\int_{\partial U_t}\frac{\phi(x)}{\big|\frac{\partial u}{\partial \nu}\big|}\,d \mathcal H^{n-1}\right)\\&=\mu(t)\left(\int_{\partial U_t^{int}}\frac{\phi(x)}{\big|\frac{\partial u}{\partial \nu}\big|}\,d \mathcal H^{n-1}+\int_{\partial U_t^{ext}}\frac{\phi(x)}{\big|\frac{\partial u}{\partial \nu}\big|}\,d \mathcal H^{n-1}\right) \\&=\mu(t) \left((-\mu'(t))+\frac {1}{\beta} \int_{\partial U_t^{ext}}\displaystyle\frac{\phi(x)}{u}\,d \mathcal H^{n-1}\right).
\end{split} 
\end{equation}
The Gaussian isoperimetric inequality \eqref{iso} gives 
\begin{equation}
\label{below}
P^2_u(t) \ge \frac{1}{2\pi}\text{exp}\left( -\left(h^{-1}(\mu(t))\right)^2\right),
\end{equation}
where $h$ is defined in \eqref{h}.
Inequalities \eqref{below} and \eqref{conto1} finally imply
\begin{equation}
\label{ta_u}
 \frac{1}{2\pi}\text{exp}\left( -\left(h^{-1}(\mu(t))\right)^2\right) \le \mu(t) \left((-\mu'(t))+\frac {1}{\beta} \int_{\partial U_t^{ext}}\displaystyle\frac{\phi(x)}{u}\,d \mathcal H^{n-1}\right),
\end{equation}
which is  inequality \eqref{1}. Clearly, repeating the same arguments  for the function $v$, we get equality  \eqref{2} in place     of  inequality \eqref{ta_u}.
\end{proof}
The following result allows to handle the right hand side in \eqref{ta_u}.
\begin{lemma}
\label{fub}
Let $v_m$ be the minimum of $v$. For almost every  $t \ge v_m$ it holds
\begin{equation}
\label{fu}
\displaystyle\int_0^{\tau} t \left(\int_{\partial U_t^{ext} }\displaystyle\frac{\phi(x)}{u(x)}\,d \mathcal H^{n-1}(x)\right)\,dt\le\frac{|\Omega|_{\phi}}{2\beta},
\end{equation}
and
\begin{equation}
\label{fv}
\displaystyle\int_0^{\tau} t \left(\int_{\partial V_t \cap \partial \Omega^{\diesis} }\displaystyle\frac{\phi(x)}{v(x)}\,d \mathcal H^{n-1}(x)\right)\,dt=\frac{|\Omega|_{\phi}}{2\beta}.
\end{equation}
\end{lemma}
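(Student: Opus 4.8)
The plan is to turn each double integral into a single integral over $\partial\Omega$ (resp.\ $\partial\Omega^{\diesis}$) by exchanging the order of integration, and then to read off the value of the resulting boundary integral from the identities already contained in the chain \eqref{ult} of the proof of Lemma \ref{confr}. The first step is the elementary but crucial description of the outer boundary: for a.e.\ $t\ge0$, Sard's Lemma makes $U_t$ a regular level set and, up to an $\mathcal H^{n-1}$-null set, $\partial U_t^{ext}=\{x\in\partial\Omega\colon u(x)>t\}$, i.e.\ the superlevel set of the trace of $u$ on $\partial\Omega$. Since the integrand $t\,\phi(x)/u(x)$ is nonnegative on $\{(t,x)\colon 0\le t\le\tau,\ x\in\partial\Omega,\ u(x)>t\}$, Tonelli's theorem applies and gives
\begin{equation*}
\int_0^{\tau} t\left(\int_{\partial U_t^{ext}}\frac{\phi(x)}{u(x)}\,d\mathcal H^{n-1}\right)dt=\int_{\partial\Omega}\frac{\phi(x)}{u(x)}\left(\int_0^{\min\{\tau,u(x)\}}t\,dt\right)d\mathcal H^{n-1}=\frac12\int_{\partial\Omega}\frac{\big(\min\{\tau,u(x)\}\big)^2}{u(x)}\,\phi(x)\,d\mathcal H^{n-1}.
\end{equation*}

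Next I would bound $\big(\min\{\tau,u\}\big)^2\le u^2$ pointwise, so that the right-hand side is at most $\tfrac12\int_{\partial\Omega}u\,\phi\,d\mathcal H^{n-1}$. To close, I would use the boundary identity $\int_{\partial\Omega}u\,\phi\,d\mathcal H^{n-1}=\beta^{-1}|\Omega|_{\phi}$, which follows from the Robin condition $u=-\beta^{-1}\partial u/\partial\nu$, the divergence theorem and the equation $-\divergenza(\phi\nabla u)=\phi$; this is precisely one of the equalities displayed in \eqref{ult}. Together these give \eqref{fu}. Observe that this bound in fact holds for every $\tau\ge0$, the hypothesis $\tau\ge v_m$ being needed only for the companion identity \eqref{fv}.

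For $v$ the same Tonelli computation is carried out on $\Omega^{\diesis}$, and the decisive point is that it closes as an \emph{equality}. Since $v(x)=v(x_1)$ is increasing and attains its minimum $v_m$ on all of the hyperplane $\partial\Omega^{\diesis}=\{x_1=\lambda\}$, one has $v\equiv v_m$ there; hence $\partial V_t\cap\partial\Omega^{\diesis}$ is the whole hyperplane for $t<v_m$ and empty for $t>v_m$. For $\tau\ge v_m$ this yields $\min\{\tau,v\}=v_m=v$ on $\partial\Omega^{\diesis}$, so the pointwise bound $\big(\min\{\tau,v\}\big)^2\le v^2$ holds with equality and the computation produces $\tfrac12\int_{\partial\Omega^{\diesis}}v\,\phi\,d\mathcal H^{n-1}=\tfrac{v_m}{2}\,P_{\phi}(\Omega^{\diesis})$. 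Invoking the other equality in \eqref{ult}, namely $v_m\,P_{\phi}(\Omega^{\diesis})=\beta^{-1}|\Omega|_{\phi}$, this equals $|\Omega|_{\phi}/(2\beta)$, which is \eqref{fv}.

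The only genuinely delicate step is the measure-theoretic identification of $\partial U_t^{ext}$ with the superlevel set $\{u>t\}\cap\partial\Omega$ for almost every $t$, which rests on Sard's Lemma and a coarea argument on $\partial\Omega$ together with the continuity of the trace; once this is in place the exchange of integrals (legitimate by nonnegativity, via Tonelli) and the pointwise bound are routine. No integrability problem arises near $\{u\to0\}$, since the combination $\phi\,\big(\min\{\tau,u\}\big)^2/u$ stays comparable to $\phi\,u$, which is integrable on $\partial\Omega$ by the trace estimate. It is worth emphasizing that the restriction $\tau\ge v_m$ is used only in the $v$-part, where it forces $\min\{\tau,v_m\}=v_m$ and thereby upgrades the inequality to the claimed equality.
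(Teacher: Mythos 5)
Your proposal is correct and follows essentially the same route as the paper: both arguments exchange the order of integration via Fubini--Tonelli, identify $\partial U_t^{ext}$ with the superlevel set of the trace of $u$ on $\partial\Omega$, and close with the Robin condition plus the divergence theorem, which give $\int_{\partial \Omega}u\,\phi\, d\mathcal H^{n-1}=\beta^{-1}|\Omega|_{\phi}$. The only cosmetic difference is that you keep the truncation at $\tau$ inside the inner integral and bound $\min\{\tau,u\}\le u$ pointwise, whereas the paper first evaluates the full integral over $t\in(0,\infty)$ and then observes that the truncated one is smaller; for $v$ both arguments use that $\partial V_t\cap\partial\Omega^{\diesis}$ is empty for $t>v_m$ to upgrade the inequality to an equality.
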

\begin{proof}
Fubini's Theorem yields
\begin{equation}
\begin{split}
\label{fub1}
\displaystyle\int_{\partial \Omega }\phi(x)\left( \displaystyle \int_0^{u(x)}\frac{t}{u(x)}\,dt\right)\,d \mathcal H^{n-1}(x)&=\displaystyle\int_{\partial \Omega }\phi(x)\left( \displaystyle \int_0^{\infty}\frac{t}{u(x)} \chi_{\{u>t\}}\,dt\right)\,d \mathcal H^{n-1}(x)\\
&=\int_0^{\infty} t \left(\displaystyle \int_{\partial U_t^{ext}} \frac{\phi(x)}{u(x)}\,d \mathcal H^{n-1}(x)\right)\,dt,
\end{split}
\end{equation}
where $\chi$ stands for the characteristic function.
Since $u$ is the solution to  problem \eqref{eq:2_intro} it holds
\begin{equation}
\begin{split}
\label{fub2}
\displaystyle\int_{\partial \Omega }\phi(x)\left( \displaystyle \int_0^{u(x)}\frac{t}{u(x)}\,dt\right)\,d \mathcal H^{n-1}(x)=&\frac12 \displaystyle\int_{\partial \Omega }\phi(x) u(x)\,d \mathcal H^{n-1}(x)=\\&-\frac{1}{2\beta} \displaystyle\int_{\partial \Omega }\phi(x) \frac{\partial u}{\partial \nu}\,d \mathcal H^{n-1}(x)=\frac{|\Omega|_{\phi}}{2\beta}.
\end{split}
\end{equation}
Observing that \[\int_0^{\tau} t \left(\displaystyle \int_{\partial U_t^{ext}} \frac{\phi(x)}{u(x)}\,d \mathcal H^{n-1}(x)\right)\,dt\le\int_0^{\infty} t \left(\displaystyle \int_{\partial U_t^{ext}} \frac{\phi(x)}{u(x)}\,d \mathcal H^{n-1}(x)\right)\,dt,\]
from \eqref{fub} and \eqref{fub2} we get \eqref{fu}.
On the other hand, by repeating the same arguments, we get  \eqref{fv}. Note that,  for all $\tau \ge v_m$ the following equality holds true
\[ \int_0^{\tau} t \left(\displaystyle \int_{\partial V_t\cap \partial \Omega^{\diesis}} \frac{\phi(x)}{v(x)}\,d \mathcal H^{n-1}(x)\right)\,dt =\int_0^{\infty} t \left(\displaystyle \int_{\partial V_t\cap \partial \Omega^{\diesis}} \frac{\phi(x)}{v(x)}\,d \mathcal H^{n-1}(x)\right)\,dt=\frac{|\Omega|_{\phi}}{2\beta},\]  since $\partial V_t\cap \partial \Omega^{\diesis}=\emptyset$ for any $\tau >v_m$.
\end{proof}

Now we can prove our main result.
\begin{proof}[Proof of Theorem \ref{princ}]
We first observe that the function 
\begin{equation}
\label{f}
\text{\ }F(s):=\frac{\text{exp}\left( -\left( h^{-1}(s)\right) ^{2}\right) }{
s^{2}}
\end{equation}
is strictly decreasing  $\forall s\in (0,1)$, where $h$ is the function defined in \eqref{h}.
More precisely we are going to show that 
\begin{equation}
\label{derf}
F'(s)<0\quad \forall s \in (0,1).
\end{equation}
A straightforward computation gives
\begin{equation*}
F^{\prime }(s) =-2\frac{\text{exp}\left( -\left( h^{-1}(s)\right) ^{2}\right) }{s^{3}}+
\frac{1}{s^{2}}\left[ -2\frac{h^{-1}(s)}{h^{\prime }(h^{-1}(s))}\right] 
\text{exp}\left( -\left( h^{-1}(s)\right) ^{2}\right) 
\end{equation*}
\begin{equation*}
=-2\frac{\text{exp}\left( -\left( h^{-1}(s)\right) ^{2}\right) }{s^{3}}+
\frac{1}{s^{2}}\left[ -2\frac{h^{-1}(s)}{-\frac{1}{\sqrt{2\pi }}\text{exp}
\left( -\frac{(h^{-1}(s))^{2}}{2}\right) }\right] \text{exp}\left( -\left(
h^{-1}(s)\right) ^{2}\right) 
\end{equation*}
\begin{equation*}
=-\frac{2}{s^{2}}\frac{\text{exp}\left( -\left( h^{-1}(s)\right) ^{2}\right) 
}{s}+\frac{2}{s^{2}}\sqrt{2\pi }h^{-1}(s)\text{exp}\left( -\frac{
(h^{-1}(s))^{2}}{2}\right) 
\end{equation*}
\begin{equation*}
=-\frac{2}{s^{2}}\left[ \frac{\text{exp}\left( -\left( h^{-1}(s)\right)
^{2}\right) }{s}-\sqrt{2\pi }h^{-1}(s)\text{exp}\left( -\frac{\left(
h^{-1}(s)\right) ^{2}}{2}\right) \right].
\end{equation*}
Therefore $F'(s)<0$ if and only if
\begin{equation*}
\frac{1}{s}\text{exp}\left( -\frac{\left( h^{-1}(s)\right)
^{2}}{2}\right) -\sqrt{2\pi }h^{-1}(s)>0, \quad \forall s \in (0,1).
\end{equation*}
Setting $t:=h^{-1}(s)$, the last inequality is equivalent to the following one
\begin{equation*}
\Psi (t):=\text{exp}\left( -\frac{t^{2}}{2}\right)
-t\int_{t}^{+\infty }\text{exp}\left( -\frac{\sigma ^{2}}{2}\right) d\sigma
>0, \quad \forall t \in \mathbb R.
\end{equation*}
Clearly it holds that
\begin{equation*}
\Psi (t)>0\text{ \ \ \ }\forall t\in \left( -\infty ,0\right] .
\end{equation*}
On the other hand
\begin{equation*}
\Psi ^{\prime }(t)=-\int_{t}^{+\infty }\text{exp}\left( -\frac{\sigma ^{2}}{2
}\right) d\sigma <0\,\quad\forall t\in \mathbb{R}.
\end{equation*}

Hence we get the claim \eqref{derf} if we show that
\begin{equation*}
\lim_{t\rightarrow +\infty }\Psi (t)=0.
\end{equation*}
This is easily verified since on one hand
\begin{equation*}
\lim_{t\rightarrow +\infty }\text{exp}\left( -\frac{t^{2}}{2}\right) =0
\end{equation*}
on the other L'H$\hat{\text o}$pital's rule ensures that
\begin{equation*}
\lim_{t\rightarrow +\infty }t\int_{t}^{+\infty }\text{exp}\left( -\frac{
\sigma ^{2}}{2}\right) d\sigma =\lim_{t\rightarrow
+\infty }\frac{t^{2}}{\text{exp}\left( \frac{t^{2}}{2}\right) }=0.
\end{equation*}
In order to prove \eqref{comp}, 
we first multiply  each side of inequality \eqref{1}  by $t\,\mu(t)\,\text{exp} \left( \left(h^{-1}(\mu(t))\right)^2\right)$ obtaining 
\begin{multline}
\label{3}
\frac{1}{2\pi}\displaystyle t \,\mu(t) \, 
\le \displaystyle  t\,(\mu(t))^2\, \text{exp}\left( \left(h^{-1}(\mu(t))\right)^2\right)(-\mu'(t))+\\+\displaystyle \frac {1}{\beta} t\,(\mu(t))^2\,\text{exp}\left( \left(h^{-1}(\mu(t))\right)^2\right)\int_{\partial U_t^{ext}}\displaystyle\frac{\phi(x)}{u}\,d \mathcal H^{n-1}. 
\end{multline}
We then  integrate between $0$ and $\tau$ such inequality,   obtaining, $\forall \tau \ge v_m$,
\begin{multline}
\label{3}
\frac{1}{2\pi}\displaystyle \int_0^{\tau}t \,\mu(t) \,dt 
\le \displaystyle \int_0^{\tau} t\,(\mu(t))^2\, \text{exp}\left( \left(h^{-1}(\mu(t))\right)^2\right)(-\mu'(t))\,dt+\\+\displaystyle \frac {1}{\beta}\int_0^{\tau} t\,(\mu(t))^2\,\text{exp}\left( \left(h^{-1}(\mu(t))\right)^2\right)\int_{\partial U_t^{ext}}\displaystyle\frac{\phi(x)}{u}\,d \mathcal H^{n-1}\,dt. 
\end{multline}
Note that inequality \eqref{derf} ensures that the function
\[
s^2 \text{exp}\left( \left(h^{-1}(s)\right)^2\right)=\frac{1}{F(s)},
\]
is strictly increasing in $(0,1)$. Therefore inequality \eqref{3} together with Lemma \ref{fub}, implies
\begin{multline}
\label{4}
\frac{1}{2\pi}\displaystyle \int_0^{\tau}t \,\mu(t) \,dt  \le \displaystyle \int_0^{\tau} t\,(\mu(t))^2\, \text{exp}\left( \left(h^{-1}(\mu(t))\right)^2\right)(-\mu'(t))\,dt+\\+\displaystyle \frac {\text{exp}\left( \left(h^{-1}(|\Omega|_{\phi})\right)^2\right)|\Omega|^3_{\phi}}{2\beta^2}, \,\,\forall \tau \ge v_m. 
\end{multline}
Let us define the following function
\[
H(l)=\int_0^l\,s^2 \text{exp}\left( \left(h^{-1}(s)\right)^2\right)\,ds.
\]
Integrating by parts  both sides in inequality \eqref{4}, we get
\begin{multline}
\label{5}
\tau\int_0^{\tau}\frac{\mu(t)}{2\pi}\,dt+\tau H\left(\mu(\tau)\right)\,dt \le \displaystyle \int_0^{\tau} \int_0^t\frac{\mu(r)}{2\pi}\,dr\,dt+ \displaystyle \int_0^{\tau} H\left( \mu(t)\right)\,dt+\\+\displaystyle \frac {\text{exp}\left( \left(h^{-1}(|\Omega|_{\phi})\right)^2\right)|\Omega|^3_{\phi}}{2\beta^2}, \, \quad \forall \tau \ge v_m. 
\end{multline}
Lemma \ref{gron}, ensures that,  $\forall \tau \ge v_m$ it holds
\begin{equation}
\label{6}
\int_0^{\tau}\frac{\mu(t)}{2\pi}\,dt+H\left(\mu(\tau)\right)\,dt  \le \displaystyle \frac{1}{v_m}\left\{ \displaystyle \int_0^{v_m} \int_0^t\frac{\mu(r)}{2\pi}\,dr\,dt+ \displaystyle \int_0^{v_m} H\left( \mu(t)\right)\,dt+\displaystyle \frac {\text{exp}\left( \left(h^{-1}(|\Omega|_{\phi})\right)^2\right)|\Omega|^3_{\phi}}{2\beta^2}
\right\}. 
\end{equation}
Repeating the same procedure for the solution to the problem \eqref{eq:3i}, we obtain the following equality
\begin{equation}
\label{t}
\int_0^{\tau}\frac{\varphi(t)}{2\pi}\,dt+H\left(\varphi(\tau)\right)\,dt  = \displaystyle \frac{1}{v_m}\left\{ \displaystyle \int_0^{v_m} \int_0^t\frac{\varphi(r)}{2\pi}\,dr\,dt+ \displaystyle \int_0^{v_m} H\left( \varphi(t)\right)\,dt+\displaystyle \frac {\text{exp}\left( \left(h^{-1}(|\Omega|_{\phi})\right)^2\right)|\Omega|^3_{\phi}}{2\beta^2}
\right\}. 
\end{equation}
Taking into account of \eqref{distr}, we can compute the right hand side in \eqref{t}, as follows
\begin{equation}
\label{fi2}
\int_0^{\tau}\frac{\varphi(t)}{2\pi}\,dt+H\left(\varphi(\tau)\right)\,dt  = \frac{v_m |\Omega|_{\phi}}{4\pi}+ H(|\Omega|_{\phi})+\displaystyle \frac {\text{exp}\left( \left(h^{-1}(|\Omega|_{\phi})\right)^2\right)|\Omega|^3_{\phi}}{2v_m\beta^2}.
\end{equation}
Then we can compare \eqref{5} and \eqref{6} obtaining 
\begin{equation}
\label{7}
\int_0^{\tau}\frac{\mu(t)}{2\pi}\,dt+H\left(\mu(\tau)\right)\,  \le \int_0^{\tau}\frac{\varphi(t)}{2\pi}\,dt+H\left(\varphi(\tau)\right)\,, \quad \forall \tau \ge v_m.
\end{equation}
Passing to the limit for $\tau \to +\infty$ we get the claim.
\end{proof}
\vspace{.1cm}
\vspace{-.3cm}

\section*{Statements and Declarations}
\begin{itemize}\item[i)] \textbf{Founding:} this work has been partially supported by the PRIN project 2017JPCAPN (Italy) grant: 
\lq\lq Qualitative and quantitative aspects of nonlinear PDEs\rq\rq,by FRA 2020 \lq\lq Optimization problems in Geometric-functional inequalities and nonlinear PDE's \rq\rq (OPtImIzE) and by GNAMPA of INdAM.\\
\item [ii)]\textbf{Competing interests:} 
on behalf of all authors, the corresponding author declares that there are no  financial or non-financial interests that are directly or indirectly related to the work submitted for publication.\\
\item[iii)] \textbf{Availability of data and material:} not applicable.\\	
\item[iv)] \textbf{Code availability:} not applicable.\\	%
\end{itemize}

\end{document}